\providecommand{\Bdsymb}{\Gamma}                   
\providecommand{\Div}{\operatorname{div}}          
\providecommand{\bDiv}{{\Div}_{\Bdsymb}}           
\providecommand{\curl}{\operatorname{{\bf curl}}}  
\providecommand{\bcurl}{{\curl}_{\Bdsymb}}         
\providecommand{\grad}{\operatorname{{\bf grad}}}       
\providecommand{\bgrad}{\grad_{\Bdsymb}}                
\newcommand{\Vn}{{\mathbf{n}}}
\newcommand{\Vu}{{\mathbf{u}}}
\newcommand{\Vv}{{\mathbf{v}}}
\newcommand{\Vx}{{\mathbf{x}}}
\providecommand{\Ba}{{\boldsymbol{a}}}
\providecommand{\Bn}{{\boldsymbol{n}}}
\providecommand{\Bt}{{\boldsymbol{t}}}
\providecommand{\Bx}{{\boldsymbol{x}}}
\newcommand{\VA}{{\mathbf{A}}}
\newcommand{\VH}{{\mathbf{H}}}
\newcommand{\VV}{{\mathbf{V}}}
\newcommand{\VW}{{\mathbf{W}}}
\newcommand{\VX}{{\mathbf{X}}}
\newcommand{\Pibf}{\boldsymbol{\Pi}}
\newcommand{\Phibf}{\boldsymbol{\Phi}}
\providecommand{\Cd}{{\cal D}}
\providecommand{\Ce}{{\cal E}}
\providecommand{\Cm}{{\cal M}}
\providecommand{\Cp}{{\cal P}}
\providecommand{\Cs}{{\cal S}}
\newcommand{\Crv}{\boldsymbol {\cal R}}
\newcommand{\Ctv}{\boldsymbol {\cal T}}
\providecommand{\bbN}{\mathbb{N}}
\providecommand{\bbR}{\mathbb{R}}
\newcommand*{\DP}[2]{\left<{#1},{#2}\right>} 
\providecommand*{\wh}[1]{\widehat{#1}}
\providecommand*{\N}[1]{\left\|{#1}\right\|} 
\newcommand*{\SN}[1]{\left|{#1}\right|}      
\newcommand*{\Hcurl}[1][\defaultdomain]{\boldsymbol{H}(\curl,{#1})}
\newcommand*{\bHcurl}[2][\defaultdomain]{\boldsymbol{H}_{#2}(\curl,{#1})}
\newcommand*{\zbHcurl}[1][\defaultdomain]{\bHcurl[#1]{0}}
\title{Convergence of the Natural $hp$-BEM for the 
  Electric Field Integral Equation on Polyhedral Surfaces
  \thanks{A.B. and N.H. acknowledge support by EPSRC under grant no. EP/E058094/1.}}
\author{A. Bespalov\thanks{Department of Mathematical Sciences, Brunel University,
                           Uxbridge, West London UB8 3PH, UK,
                           albespalov\symbol{64}yahoo.com}
  \and 
  N. Heuer\thanks{Facultad de Matem\'aticas, Pontificia Universidad Cat\'olica de Chile,
                  Avenida Vicu\~na Mackenna 4860, Santiago, Chile,
                  nheuer\symbol{64}mat.puc.cl} 
  \and 
  R. Hiptmair\thanks{SAM, ETH Z\"urich, CH-8092 Z\"urich, Switzerland,
                     hiptmair\symbol{64}sam.math.ethz.ch}}
\newtheorem{assumption}[theorem]{Assumption}
\newtheorem{remark}[theorem]{Remark}
\begin{document}
\DeclareGraphicsRule{.eps.bz2}{eps}{.eps.bb}{`bunzip2 -c #1}
\DeclareGraphicsRule{.ps.bz2}{eps}{.ps.bb}{`bunzip2 -c #1}
\DeclareGraphicsRule{.eps.bz}{eps}{.eps.bb}{`bunzip2 -c #1}
\DeclareGraphicsRule{.eps.gz}{eps}{.eps.bb}{`gunzip -c #1}
\DeclareGraphicsRule{.ps.bz}{eps}{.ps.bb}{`bunzip2 -c #1}
\DeclareGraphicsRule{.ps.gz}{eps}{.ps.bb}{`gunzip -c #1}

\maketitle

\begin{abstract}
  We consider the variational formulation of the electric field integral equation
  (EFIE) on bounded polyhedral open or closed surfaces. We employ a conforming
  Galerkin discretization based on $\bDiv$-conforming Raviart-Thomas boundary
  elements (BEM) of locally variable polynomial degree on shape-regular surface
  meshes. We establish asymptotic quasi-optimality of Galerkin solutions on
  sufficiently fine meshes or for sufficiently high polynomial degree.
\end{abstract}

\begin{keywords} 
  electromagnetic scattering, electric field integral equation (EFIE), Galerkin
  discretization, boundary element method (BEM), $hp$-refinement, non-coercive
  variational problems, smoothed Poincar\'e mapping, projection based interpolation
\end{keywords}

\begin{AMS}
  65N38, 65N12, 78M15, 65N38
\end{AMS}

\pagestyle{myheadings}
\thispagestyle{plain}
\markboth{A. Bespalov, N. Heuer, R. Hiptmair}{$hp$-BEM for EFIE}

\section{Introduction}
\label{sec:introduction}

Let $\Gamma$ be a piecewise flat (open or closed) orientable surface equipped with a
conforming triangulation $\Cm=\{K\}$, consisting of triangles. Throughout, uniform
bounds on the shape-regularity of the cells will be tacitly taken for granted, see
\cite[Ch.~3, \S~3.1]{CIA78}. For a fixed wave number $k>0$, let $V_{k}$ and $\VV_{k}$
stand for the scalar or vectorial single layer boundary integral operator on $\Gamma$
for the Helmholtz operator $-\Delta - k^{2}$, see \cite[Sect.~4.1]{BCS01} or
\cite[Sect.~5]{BUH03}.  The bilinear form underlying the variational formulation of
the electric field integral equation (``Rumsey's principle'') reads
(see \cite{BEN84,HAR89}, \cite[Sect.~4.2]{BCS01} or \cite[Sect.~7.2]{BUH03} for closed
surfaces, ``boundaries'', and \cite[Sect.~3]{BUC01} for open surfaces, ``screens'')
\begin{gather}
  \label{eq:a}
  a(\Vu,\Vv) := \DP{V_{k}\bDiv \Vu}{\bDiv \Vv}_{\Gamma} -
  k^{2}\DP{\VV_{k}\Vu}{\Vv}_{\Gamma}\;,
\end{gather}
where, as discussed in \cite{BCS00}, $\DP{\cdot}{\cdot}_{\Gamma}$ hints at a duality
pairing, extending the $L^{2}(\Gamma)$-pairing for tangential vector fields or
functions on $\Gamma$. The variational problem is posed on the Hilbert space
\begin{gather}
  \label{eq:X1}
  \VX = \VH_{\parallel}^{-1/2}(\bDiv,\Gamma)\;,
\end{gather}
in the case of a boundary $\Gamma=\partial\Omega$, $\Omega\subset\bbR^{3}$ a Lipschitz
polyhedron, or on 
\begin{gather}
  \label{eq:X2}
  \VX = \{\Vu\in \VH_{\parallel}^{-1/2}(\bDiv,\Gamma):\,
  \DP{\Vu}{\bgrad v} + \DP{\bDiv \Vu}{v} = 0 \;\forall v\in C^{\infty}(\overline{\Gamma})\}
\end{gather}
in the case of a screen $\Gamma$. The latter space can be understood as a space of
$\bDiv$-conforming tangential surface vector fields with vanishing in-plane normal
component on the screen edge $\partial\Gamma$.  We refer to
\cite{BUC99,BUC99a,BCS00,BUF02p,BUH03} for a definition and more information about
$\VH_{\parallel}^{-1/2}(\bDiv,\Gamma)$ and other trace spaces.  An in-depth discussion for
screens is given in \cite[Sect.~2]{BUC01}.  In this article we adopt the notations of
\cite{BUC99}. Further, the two situations of open and closed surfaces will be treated
in parallel.

The EFIE can be recast as a linear variational problem for $a(\cdot,\cdot)$ on $\VX$:
given a source functional $f\in \VX'$ it reads
\begin{gather}
  \label{eq:lvp}
  \Vu\in \VX:\quad a(\Vu,\Vv) = f(\Vv)\quad\forall \Vv\in \VX\;.
\end{gather}
In order to ensure uniqueness of the solution to \eqref{eq:lvp},
we make the following assumption \cite[Sect.~7.1]{BUH03}.

\begin{assumption}
  In the case of a closed surface $\Gamma=\partial\Omega$ we assume that $k$ is
  different from a Dirichlet eigenvalue of the operator $\curl\curl$ on $\Omega$.
\end{assumption}

We opt for a natural boundary element (BE) Galerkin discretization based on
conforming trial and test spaces $\VX_{N}\subset \VX$. These are obtained by using
$\VH(\Div_\Gamma,\Gamma)$-conforming Raviart-Thomas spaces of variable local polynomial
degrees $p_{K}\in\bbN_{0}$, $K\in\Cm$, on the surface triangulation $\Cm$,
see Sect.~\ref{sec:bound-elem-spac} for a precise definition.

Refinement of the BE spaces can be achieved by raising the local polynomial degrees
$p_{K}$ ($p$-refinement) or reducing the sizes $h_{K}$ of the cells of $\Cm$
($h$-refinement). Thus, the proposed discretization qualifies as ``$hp$-boundary
element method (BEM)''.

Roughly speaking, judicious $hp$-refinement can be expected to offer exponential
convergence of Galerkin solutions even when the exact solution lacks global smoothness
\cite{SAB98}. $hp$-BEM approaches have been suggested for various boundary integral
equations \cite{BEH08a,DEO05,BES08,HEMS99,HES96b,GUH06} and are a natural idea for
the EFIE as well. While convergence theory for $h$-refinement is well established
\cite{BEN84,BEN84a,HIS01,BUC01}, the extension to $hp$-refinement proved to be
difficult, see \cite{BEH09a,BEH09} for partial results.

This article fills the gap and proves the following convergence result that
translates into an a priori error estimate in the natural ``energy norm'' provided
that information about some smoothness of the solution $\Vu$ of \eqref{eq:lvp} is
available, \textit{cf.} \cite[Sect.~8]{HIS01} and \cite[Sect.~4]{BUC01} for the
$h$-version, \cite{BEH09} for the $p$-version, and \cite{BEH09a} for the $hp$-version
with quasi-uniform meshes.

\begin{theorem}
  \label{thm:main}
  There is a constant $C_{0}>0$ such that for any $f \in \VX'$ and for arbitrary
  mesh-degree combination satisfying 
  $\max\limits_{K} \sqrt{\frac{h_{K}}{p_{K}+1}} < C_{0}$
  the Galerkin BE discretization of \eqref{eq:lvp} admits a unique solution
  $\Vu_{N} \in \VX_N$ and the Galerkin $hp$-BEM converges quasi-optimally, i.e.,
  \begin{gather}
    \label{eq:1}
    \N{\Vu-\Vu_{N}}_{\VX} \leq C \inf\limits_{\Vv_{N}\in \VX_{N}} \N{\Vu-\Vv_{N}}_{\VX}\;.
  \end{gather}
  Both constants $C_{0}$ and $C$ may depend only on the geometry of $\Gamma$ and the
  shape-regularity of the surface triangulation $\Cm$.
\end{theorem}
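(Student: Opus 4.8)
The plan is to deduce the result from the abstract theory of Galerkin discretizations for bilinear forms that satisfy a generalized G{\aa}rding inequality (``$T$-coercivity''), by exhibiting a \emph{discrete} counterpart $T_{N}$ of the relevant sign-flip isomorphism and controlling $T-T_{N}$ on $\VX_{N}$ by the quantity $\max_{K}\sqrt{h_{K}/(p_{K}+1)}$.

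\emph{Continuous structure.} First I would record the Hodge-type splitting $\VX=\VX^{0}\oplus\VX^{\perp}$, where $\VX^{0}=\{\Vu\in\VX:\bDiv\Vu=0\}$ coincides, up to a finite-dimensional space of harmonic tangential fields, with the range of the surface $\bcurl$. On $\bDiv\VX^{\perp}$ the first term of \eqref{eq:a} inherits the coercivity (modulo a compact perturbation) of $\DP{V_{k}\cdot}{\cdot}_{\Gamma}$ on $H^{-1/2}(\Gamma)$, while on $\VX^{0}$ the graph norm of $\VX$ reduces to the $\VH^{-1/2}_{\parallel}(\Gamma)$-norm and the second term inherits the coercivity (modulo compact) of $\DP{\VV_{k}\cdot}{\cdot}_{\Gamma}$ there. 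Hence $T:=\mathrm{Id}$ on $\VX^{\perp}$ and $T:=-\mathrm{Id}$ on $\VX^{0}$ makes $\Vu\mapsto a(\Vu,T\Vu)$ coercive up to a compact term, and the stated injectivity Assumption on $k$ upgrades this Fredholm-alternative situation to a genuine inf-sup condition for $a$ on $\VX$; this is the analysis underlying the $h$-version in \cite{HIS01,BUC01}.

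\emph{Discrete inf-sup.} The Raviart--Thomas spaces $\VX_{N}$ fit into a discrete exact sequence, so $\bDiv\VX_{N}$ equals the space of (discontinuous) piecewise polynomials of degrees $p_{K}$, and $\VX_{N}$ splits as $\VX_{N}^{0}\oplus\VX_{N}^{\perp}$ with $\VX_{N}^{0}=\VX_{N}\cap\ker\bDiv$. Let $T_{N}$ be the sign-flip for this splitting. By the standard perturbation argument it is enough to prove $\N{(T-T_{N})\Vv_{N}}_{\VX}\le\eta\,\N{\Vv_{N}}_{\VX}$ for all $\Vv_{N}\in\VX_{N}$, with $\eta=\eta(N)\le C\max_{K}\sqrt{h_{K}/(p_{K}+1)}$; then, once $\max_{K}\sqrt{h_{K}/(p_{K}+1)}<C_{0}$, the discrete form is uniformly stable and the Schatz argument yields \eqref{eq:1}. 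Bounding $T-T_{N}$ is a comparison of the continuous and discrete Hodge decompositions, which I would reduce to two $hp$-approximation facts: (a) for piecewise polynomial $\phi$, $\N{(I-\pi_{N})\phi}_{H^{-1/2}(\Gamma)}\le C\,(\max_{K}\sqrt{h_{K}/(p_{K}+1)})\,\N{\phi}_{L^{2}(\Gamma)}$, with $\pi_{N}$ the $L^{2}(\Gamma)$-orthogonal projection onto $\bDiv\VX_{N}$; and (b) the regular vector field obtained by applying the smoothed Poincar\'e (Costabel--McIntosh) lifting to a discrete function can be approximated in $\VH^{-1/2}_{\parallel}(\bDiv,\Gamma)$ by $\VX_{N}$ with the same gain $\max_{K}\sqrt{h_{K}/(p_{K}+1)}$. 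This is exactly the step that forces the smallness threshold.

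\emph{The main tool, and the obstacle.} The technical heart is a Fortin-type, projection-based interpolation operator $\Pi_{N}$, defined on a regular subspace of $\VX$ with values in $\VX_{N}$, commuting with the divergence in the sense $\bDiv\Pi_{N}=\pi_{N}\bDiv$, uniformly bounded with the correct $hp$-dependence, and carrying the $\sqrt{h/(p+1)}$ gain in the $\VH^{-1/2}_{\parallel}$-component of the norm; it is built face by face on the flat pieces of $\Gamma$ from scalar $hp$-interpolation and glued so as to preserve in-plane tangential continuity across the edges (and, on screens, the vanishing in-plane normal trace on $\partial\Gamma$). Fed with a regularized decomposition $\Vu=\Vr+\bgrad\varphi$ in which $\Vr$ and $\varphi$ gain a definite amount of Sobolev regularity over generic elements of $\VX$ (say $\Vr\in\VH^{s}_{\parallel}(\Gamma)$, $\varphi\in H^{1/2+s}(\Gamma)$, $s>0$, up to lower-order corrections) supplied by the smoothed Poincar\'e mapping, this operator delivers both the decomposition comparison above and the plain approximability $\inf_{\Vv_{N}}\N{\Vu-\Vv_{N}}_{\VX}\to0$, after which the G{\aa}rding machinery closes the proof. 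I expect the hardest part to be precisely the construction and analysis of $\Pi_{N}$: obtaining a \emph{uniform} bound together with the explicit $\sqrt{h_{K}/(p_{K}+1)}$ factor in the fractional-order trace norm, on shape-regular meshes with locally variable polynomial degree over a non-smooth surface, and interpolating between the $L^{2}$- and $H^{\pm1}$-scales to localize the $H^{\pm1/2}$-norms; all the rest is a careful but essentially routine assembly, extending the $h$-version results of \cite{HIS01,BUC01} and the quasi-uniform $hp$-version of \cite{BEH09a}.
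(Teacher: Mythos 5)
Your abstract framework is sound and essentially equivalent to the one the paper uses: the $T$-coercivity/perturbation argument with a sign-flipping isomorphism built from a splitting of $\VX$ is exactly the machinery of \cite{BUF03} that the paper invokes through its conditions \textbf{(A)}--\textbf{(C)}, and you have correctly identified the two local workhorses (the Costabel--McIntosh smoothed Poincar\'e lifting and commuting projection-based interpolation with the $\sqrt{h_K/(p_K+1)}$ scalar estimate). The gap is in your choice of splitting. You take the Hodge decomposition $\VX=\VX^{0}\oplus\VX^{\perp}$ and then need to approximate the $\VX^{\perp}$-type component associated with a discrete field, with a $\sqrt{h_K/(p_K+1)}$ gain. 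On a polyhedral surface the Hodge complement $\VX^{\perp}$ does \emph{not} in general lie in $\mathbf{H}^{1/2}_{\perp}(\Gamma)$: its regularity is dictated by the corner and edge singularities of $\Gamma$ and can drop below the threshold needed for a projection-based interpolation operator of the type you describe to be defined, let alone uniformly bounded. This is precisely why the paper discards the Hodge decomposition; your route is that of the earlier preprint \cite{BEH08}, which for this reason required a substantially more sophisticated interpolation operator that you have not constructed.

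Second, your source of global regularity is misattributed. The smoothed Poincar\'e lifting is a right inverse of the two-dimensional divergence on a star-shaped planar domain; in the paper it is used only \emph{locally on the reference triangle}, to prove the key interpolation estimate (Lemma~\ref{lem:41}) for fields $\Vu\in\mathbf{H}^{1/2}(K)$ whose divergence is already discrete. It cannot produce the global regularized decomposition $\Vu=\Vr+\bgrad\varphi$ you invoke on a closed polyhedral surface or a screen. What actually does this job in the paper is a regularizing projection $\PR_{\ast}$ built by a detour through a volume domain: solve a Neumann problem $-\Delta w=0$, $\grad w\cdot\Bn=\bDiv\Vu$ in $\Omega$ (resp.\ in $\Omega_{1},\Omega_{2}$ for a screen), apply the $H^{1}$-regular vector potential $\LI$ of Lemma~\ref{lem:L} to $\grad w$, and take the rotated tangential trace. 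This yields $\bDiv\PR_{\ast}\Vu=\bDiv\Vu$ together with $\N{\PR_{\ast}\Vu}_{\mathbf{H}^{1/2}_{\perp}(\Gamma)}\leq C\N{\bDiv\Vu}_{H^{-1/2}(\Gamma)}$ as in \eqref{eq:3}, i.e.\ a complement $\VV=\PR_{\ast}(\VX)$ that is $\mathbf{H}^{1/2}$-regular \emph{by construction}, so that Lemma~\ref{lem:41} applies elementwise to $\PR_{\ast}\Vu_{N}$ and the gap property \eqref{eq:gap} follows. Without this (or an equivalent) construction, the step ``bounding $T-T_{N}$'' in your outline cannot be completed as stated.
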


We remark that the policy of the proof has a lot in common with recent proofs of
discrete compactness for the $p$-version of edge elements \cite{HIP08t,BCD06,BDC03}.
The main tools are the same, namely, the sophisticated mathematical inventions of
regularizing lifting operators \cite{CMI08} (see Sect.~\ref{sec:smooth-poinc-mapp} below)
and projection based interpolation operators \cite{DEB04,DEB01} (see
Sect.~\ref{sec:proj-based-interp}). They pave the way for verifying the assumptions
of an abstract theory of Galerkin approximations for non-coercive variational problems,
see \cite{BUF03} and Sects.~\ref{sec:abstract-theory}, \ref{sec:discrete-splitting} below.

Building on these mighty foundations the present article cannot be and does not aspire to be
self-contained, but will give detailed references to relevant literature.  We refer
to \cite{BEH08} for an earlier version of this paper whose analysis is based on a
Hodge-decomposition of $\VX$ which, due to regularity issues on non-smooth surfaces,
requires a sophisticated projection based interpolation operator which is not needed
in this paper.

In the sequel, generic constants, designated by $C$, $C_{0}$, $C_{1}$, etc., may
depend only on the geometry of $\Gamma$ and the shape-regularity of $\Cm$. They must not
depend on cell sizes, local polynomial degrees, and any function.


\newcommand{\PR}{\operatorname{\mathsf{R}}}
\newcommand{\LI}{\operatorname{\mathsf{L}}}
\newcommand{\RT}[2]{\Crv\Ctv_{#1}(#2)}
\newcommand{\RTn}[2]{\Crv\Ctv_{#1,0}(#2)}

\section{Boundary element spaces}
\label{sec:bound-elem-spac}

Raviart-Thomas surface elements provide an affine equivalent family of
$\bDiv$-conforming finite elements under the Piola transformation, see
\cite[Sect.~III.3]{BRF91} and \cite{RAT77}. We write $\RT{p}{K}$ for the local
Raviart-Thomas space of order $p$ on the triangle $K\in\Cm$, and $\RTn{p}{K}\subset
\mathbf{H}_{0}(\Div,K)$ for the subspace of local Raviart-Thomas vector fields with
vanishing normal components on $\partial K$.  Vector fields in the latter spaces will
be identified with their extensions by zero onto the whole surface $\Gamma$.

Given a polynomial degree distribution $\{p_{K}:\,p_{K}\in\bbN_{0},\,K\in\Cm\}$, we
define edge degrees according to the ``maximum rule''
\begin{gather}
  \label{eq:ed}
  p_{E} := \max \{ p_{K}:\; K\in\Cm,\; E\subset\overline{K}\}\;,\quad
  E\in\Ce\;,
\end{gather}
where $\Ce$ is the set of edges of $\Cm$. As elaborated in \cite[Sect.~3.4]{HIP02},
Raviart-Thomas spaces can be split into local ``edge contributions'' and ``cell
contributions''. In detail, write $\psi_{1}$ and $\psi_{2}$ for the 
piecewise linear, continuous ``tent/hat functions'' associated with the endpoints
of some edge $E\subset\Ce$. We introduce the edge space $\RT{p_{E}}{E}$ as
\begin{gather}
  \label{eq:18}
  \RT{p_{E}}{E} := \operatorname{span}\{\bcurl (\psi_{1}^{\alpha}\psi_{2}^{\beta}),\;
  \alpha,\beta\in\bbN,\;\alpha+\beta=p_{E}+1\}\;.
\end{gather}
These spaces $\RT{p_{E}}{E}$ obviously satisfy
\begin{gather}
  \label{eq:12}
  \Vu_{N}\in \RT{p_{E}}{E}\quad\Rightarrow\quad
  \operatorname{supp}\Vu_{N}\subset
  \bigcup\{\overline{K}:\,E\subset\overline{K}\}\quad
  \text{and}\quad 
  \boxed{\bDiv\Vu_{N} = 0} \;.
\end{gather}
Then, we \emph{define} the boundary element space (oblivious of boundary conditions!) 
according to 
\begin{gather}
  \label{eq:5}
  \widetilde{\VX}_{N} = \RT{0}{\Cm} + \sum\limits_{E\in\Ce} \RT{p_{E}}{E} + 
  \sum\limits_{K\in\Cm} \RTn{p_{K}}{K}\;.
\end{gather}
Here, the space $\RT{0}{\Cm}$ is the lowest order Raviart-Thomas BE space.  Thanks to
the maximum rule \eqref{eq:ed}, the localized spaces $\VX_{N}(K) :=
{\widetilde{\VX}_{N}|}_{K}$, $K\in\Cm$, fulfil
\begin{gather}
  \label{eq:10}
  \RT{p_{K}}{K} \subset {\VX}_{N}(K)\quad\forall K\in\Cm\quad\text{and}\quad
  {\widetilde{\VX}_{N}\cdot\Vn_{E}|}_{E} = \Cp_{p_{E}}(E)\quad\forall E\in\Ce\;,
\end{gather}
with $\Vn_{E}$ standing for an edge normal, and $\Cp_{p}$ for the space
of (multivariate) polynomials of degree $\leq p$, $p\in\bbN_{0}$. Now, we
are in a position to introduce the $hp$-BEM Galerkin trial and test
spaces: 
\begin{itemize}
\item we pick $\VX_{N}:= \widetilde{\VX}_{N}$ for closed surfaces $\Gamma$,
\item we choose $\VX_{N} := \widetilde{\VX}_{N} \cap \VX$ for screens, that is, in
  order to obtain $\VX_{N}$ edge spaces for edges contained in $\partial\Gamma$ are
  simply discarded as well as basis functions of $\RT{0}{\Cm}$ associated with edges
  on $\partial\Gamma$.
\end{itemize}

Note that for $E\subset\overline{K}$, $K\in\Cm$, we may encounter $p_{E}> p_{K}$,
and, consequently,
\begin{gather}
  \label{eq:16}
  \VX_{N}(K) \not\subset \RT{p_{K}}{K} \subset \VX_{N}(K)\; !
\end{gather}
However, thanks to \eqref{eq:5} and \eqref{eq:12}, we can take for granted
\begin{gather}
  \label{eq:17}
  \bDiv\VX_{N}(K) = \bDiv \RT{p_{K}}{K}\;.
\end{gather}
From \cite[\S III.3, Prop.~3.2]{BRF91} we know that $\bDiv\RT{p}{K} = \Cp_{p}(K)$. Thus, by
\eqref{eq:5}, $\bDiv:\widetilde{\VX}_{N}\mapsto Q_{N}$, where $Q_{N}\subset L^{2}(\Gamma)$
is the space of $\Cm$-piecewise polynomials with degree $p_{K}$ on every $K\in\Cm$.

By \cite[Theorem~3.7]{HIP02}, \cite[Sect.~5.5]{AFW06}, the Raviart-Thomas BE space
$\widetilde{\VX}_{N}$ allows for a \emph{discrete scalar potential space} $\Cs_{N}
\subset C^{0}(\Gamma)$ comprising continuous piecewise polynomial functions on $\Cm$
such that ${\Cs_{N}|}_{E} \subset \Cp_{p_{E}+1}(E)$ for all $E\in\Ce$ and the
localized spaces $\Cs_{N}(K) = {\Cs_{N}|}_{K}$, $K\in\Cm$, satisfy
\begin{gather}
  \label{eq:14}
  \bcurl\Cs_{N}(K) = \VX_{N}(K) \cap \mathbf{H}(\bDiv0,K)\quad\forall K\in\Cm\;.
\end{gather}

Below, we make repeated use of transformation to the reference triangle (``unit
triangle'') $\widehat{K} :=
\operatorname{convex}\{\binom{0}{0},\binom{1}{0},\binom{0}{1}\}$, which is mapped to
a generic $K\in\Cm$ by the affine mapping $\Phibf_{K}:\widehat{K}\mapsto K$,
$\Phibf(\widehat{\Bx}) := \VA_{K}\widehat{\Bx}+\Bt_{K}$,
$\VA_{K}\in\mathbb{R}^{3,2}$, $\Bt_{K}\in\mathbb{R}^{3}$. Writing $\Phibf_{K}^{\ast}$
for the associated co-variant pullback of tangential vector fields, we define spaces
of functions $\widehat{K}\mapsto\mathbb{R}^{2}$,
\begin{gather}
  \label{eq:22}
  \VX_{N}(\widehat{K}) := \Phibf_{K}^{\ast}\VX_{N}(K)\;,
\end{gather}
which, due to non-uniform polynomial degrees, may be different for different cells
$K$. The relevant $K$ will be clear from the context. 

\begin{remark}
  \label{rem:be}
  For the sake of brevity we do not include Raviart-Thomas BE spaces on (uniformly
  shape regular) quadrilaterals and BDM-type BE spaces on triangles into our
  analysis. With slight alterations the approach of this paper covers these
  settings. Besides, curved elements can be treated by the usual mapping 
  techniques. 
\end{remark}


\section{Splitting technique}
\label{sec:abstract-theory}

Owing to the infinite-dimensional kernel of $\bDiv$ the bilinear form $a$ from
\eqref{eq:a} fails to be $\VX$-coercive, which massively compounds the difficulties
of convergence analysis for Galerkin schemes, as discussed, e.g., in
\cite[Sect.~3]{BUH03} and \cite{CHR00}. An abstract theory for tackling a priori
Galerkin error estimates for non-coercive variational problems like \eqref{eq:lvp} was
developed in \cite{BHP01,BUC01} and, in particular, in \cite[Sect.~3]{BUF03}. The
latter article tells us that Theorem~\ref{thm:main} will follow, once we establish
\begin{itemize}
\item[\textbf{(A)}] the existence of a stable direct splitting $\VX=\VV\oplus\VW$ such that
  {$a_{|V\times V}$} and $a_{|W\times W}$ are both $\VX$-coercive and
  $a_{|V\times W}$ and $a_{|W\times V}$ are both compact,
\item[\textbf{(B)}] the existence of a corresponding decomposition $\VX_{N} = \VV_{N} + \VW_{N}$,
  $\VW_{N}\subset\VW$, that is uniformly stable with respect to cell sizes and
  polynomial degree $p$,
\item[\textbf{(C)}] the \emph{gap property}
  \begin{gather}
    \label{eq:gap}
    \sup\limits_{\Vv_{N}\in\VV_{N}}  \inf\limits_{\Vv\in \VV}
    \frac{\N{\Vv-\Vv_{N}}_{X}}{\N{\Vv_{N}}_{X}} \leq C 
    \max_{K}\sqrt{\frac{h_{K}}{p_{K}+1}}.
  \end{gather}
\end{itemize}
We remark that the approximation property
\begin{gather}
  \label{eq:CAS}
  \inf\limits_{\Vv_{N}\in \VX_{N}} \N{\Vu-\Vv_{N}}_{X} \to 0\quad\text{as}\quad
  \max_{K}\sqrt{\frac{h_{K}}{p_{K}+1}} \to 0,
\end{gather}
dubbed CAS in \cite{BUF03,CFR00}, is automatically satisfied for families of
$hp$-Raviart-Thomas spaces on families of uniformly shape-regular meshes.

Taking the cue from the numerical analysis of electromagnetic wave equations
\cite[Sect.~5]{HIP02}, one might resort to the ``$L^{2}(\Gamma)$-orthogonal''
\emph{Hodge-decomposition} of $\VX$ \cite{BUC99a} in order to obtain a suitable
splitting. For $h$-version analysis this idea was successfully applied in
\cite{HIS01,BCS01,BHP01,BUC01}. Yet, on non-smooth surfaces, smoothness of functions
in the $\VV$-component may be poor, which causes substantial technical difficulties,
\textit{cf.}~\cite{BEH08}.
These are avoided when following the guideline that the analysis of boundary integral
operators is often greatly facilitated by taking a detour via a volume domain,
\textit{cf.} \cite{COS88a}. This strategy yields decompositions with enhanced
smoothness of the $\VV$-component.

More concretely, as in \cite{HIP01b} and \cite[Sect.~4.3.1]{BUF03}, $\VV$ and $\VW$
are constructed via a \emph{regularizing projection} $\PR:\VX\mapsto \VX$. To define
them, we intermittently visit volume domains abutting $\Gamma$. There the
construction employs $H^{1}$-regular vector potentials, see \cite[Sect.~2.4]{HIP02} and
\cite[Sect.~3]{ABD96}:

\begin{lemma}
  \label{lem:L}
  For any bounded Lipschitz domain $\Omega\subset\bbR^{3}$ there are continuous mappings
  $\LI:\curl \mathbf{H}(\curl,\Omega)\mapsto (H^{1}(\Omega))^{3}$ and $\LI_{0}:\curl
  \mathbf{H}_{0}(\curl,\Omega)\mapsto (H^{1}_{0}(\Omega))^{3}$ such that 
  $\curl \LI \Phibf =
  \Phibf$ for all $\Phibf\in \curl \mathbf{H}(\curl,\Omega)$ and 
  $\curl \LI_{0} \Phibf = \Phibf$ for all $\Phibf\in \curl \mathbf{H}_{0}(\curl,\Omega)$.
\end{lemma}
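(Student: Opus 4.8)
Goal: prove Lemma~\ref{lem:L} — construct continuous right inverses for $\curl$ mapping into $H^1$ (resp. $H^1_0$).

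The plan is to reduce the assertion to the model case of a ball, where explicit \emph{regularized potential operators} are available, and to transfer the result back to $\Omega$ by a divergence-preserving extension (for $\LI$) or by a curl-free correction term (for $\LI_{0}$). Concretely, fix a ball $B$ with $\overline{\Omega}\subset B$ and set $D:=B\setminus\overline{\Omega}$, again a bounded Lipschitz domain. On $B$, which is star-shaped with respect to a ball, the regularizing (``smoothed Poincar\'e'') homotopy operators of \cite{CMI08} provide a continuous linear map $\mathsf{K}\colon L^{2}(B)^{3}\to (H^{1}(B))^{3}$ such that $\curl\,\mathsf{K}\BF=\BF$ whenever $\Div\BF=0$ in $B$, with $\N{\mathsf{K}\BF}_{(H^{1}(B))^{3}}\le C\,\N{\BF}_{L^{2}(B)^{3}}$. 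Everything then hinges on extending the given field to $B$ in a structure-preserving way.

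For $\LI$: given $\Phibf=\curl\Bw\in\curl\,\mathbf{H}(\curl,\Omega)$ one has $\Div\Phibf=0$ in $\Omega$, hence $\Phibf\in\mathbf{H}(\Div,\Omega)$ and its normal trace $\Phibf\cdot\Bn\in H^{-1/2}(\partial\Omega)$ is bounded by $\N{\Phibf}_{L^{2}(\Omega)^{3}}$. By Stokes' theorem on each closed connected component $\Sigma$ of $\partial\Omega$ (and on each cavity-bounding surface) the flux $\langle\Phibf\cdot\Bn,1\rangle_{\Sigma}$ vanishes; these are precisely the solvability conditions for the Neumann problems $\Delta\phi=0$ in $D$, $\partial_{n}\phi=\Phibf\cdot\Bn$ on $\partial\Omega$, $\partial_{n}\phi=0$ on $\partial B$, on each component of $D$, with $\N{\nabla\phi}_{L^{2}(D)}\le C\N{\Phibf}_{L^{2}(\Omega)^{3}}$. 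Glueing $\widehat{\Phibf}:=\Phibf$ on $\Omega$, $\widehat{\Phibf}:=\nabla\phi$ on $D$, $\widehat{\Phibf}:=0$ on $\bbR^{3}\setminus\overline{B}$ produces a field in $\mathbf{H}(\Div,\bbR^{3})$ with $\Div\widehat{\Phibf}=0$, support in $\overline{B}$ and $\N{\widehat{\Phibf}}_{L^{2}(B)^{3}}\le C\N{\Phibf}_{L^{2}(\Omega)^{3}}$ (matching of normal traces across $\partial\Omega$ and $\partial B$ is what makes the glueing $\Div$-conforming). Then $\LI\Phibf:=(\mathsf{K}\widehat{\Phibf})|_{\Omega}$ lies in $(H^{1}(\Omega))^{3}$, satisfies $\curl\LI\Phibf=\widehat{\Phibf}|_{\Omega}=\Phibf$, and inherits the bound.

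For $\LI_{0}$: if $\Phibf=\curl\Bw$ with $\Bw\in\mathbf{H}_{0}(\curl,\Omega)$, then $\Phibf$ is divergence-free and, since $\Bw\times\Bn=0$, also $\Phibf\cdot\Bn=0$ on $\partial\Omega$, so its extension by zero $\widehat{\Phibf}$ belongs to $\mathbf{H}(\Div\,0,\bbR^{3})$, is supported in $\overline{B}$, and has $\N{\widehat{\Phibf}}_{L^{2}(B)^{3}}=\N{\Phibf}_{L^{2}(\Omega)^{3}}$. Put $\Bu:=\mathsf{K}\widehat{\Phibf}\in(H^{1}(B))^{3}$, so $\curl\Bu=\widehat{\Phibf}$ and in particular $\curl\Bu=0$ in $D$. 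The condition $\Bw\times\Bn=0$ also forces the periods of $\Phibf$ over generators of $H_{1}(D;\bbR)$ to vanish (each such period equals a flux of $\Phibf$ through a relative cycle of $\Omega$ with boundary on $\partial\Omega$, which is zero by the same Stokes argument), so $\Bu=\nabla q$ in $D$ for some $q\in H^{2}(D)$ with $\int_{D}q=0$ and $\N{q}_{H^{2}(D)}\le C\N{\Bu}_{H^{1}(D)}\le C\N{\Phibf}_{L^{2}(\Omega)^{3}}$. Extending $q$ to $\widetilde{q}\in H^{2}(\bbR^{3})$ by a bounded linear (Stein) extension and setting $\LI_{0}\Phibf:=(\Bu-\nabla\widetilde{q})|_{\Omega}$ gives, since $\widetilde q=q$ on $D$ and thus $\Bu$ and $\nabla\widetilde q$ have the same trace on $\partial\Omega$, a field in $(H^{1}_{0}(\Omega))^{3}$ with $\curl\LI_{0}\Phibf=\curl\Bu=\Phibf$ in $\Omega$ and the desired norm bound; all constructions are linear.

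The step I expect to be the main obstacle is not any single computation but the insistence that every estimate be controlled by $\N{\Phibf}_{L^{2}(\Omega)}$ alone (never by the possibly much larger $\N{\Bw}_{\mathbf{H}(\curl,\Omega)}$): this is what rules out the naive route of merely extending a vector potential, and forces the divergence-preserving extension in the $\LI$ case and the identification of the exterior curl-free remainder as an honest gradient in the $\LI_{0}$ case. The latter is exactly where the topological hypotheses $\Phibf\in\curl\,\mathbf{H}(\curl,\Omega)$ resp.\ $\Phibf\in\curl\,\mathbf{H}_{0}(\curl,\Omega)$ are consumed, and where bookkeeping for non-simply-connected or multiply-connected $\Omega$ must be done carefully. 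All of this is classical; fully rigorous constructions for arbitrary bounded Lipschitz domains are in \cite{ABD96} (see also \cite[Sect.~2.4]{HIP02}), and $\mathsf{K}$ is the regularized Poincar\'e operator of \cite{CMI08}.
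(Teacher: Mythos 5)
Your proposal is correct, and it is essentially the construction that the paper merely delegates to the literature: Lemma~\ref{lem:L} is stated without proof, with pointers to \cite[Sect.~2.4]{HIP02} and \cite[Sect.~3]{ABD96}, and your $\LI$ is precisely the argument there (extend $\Phibf$ divergence-free to a ball by solving exterior Neumann problems, whose compatibility is exactly the vanishing of the flux of $\curl\Bw$ through each component of $\partial\Omega$, then apply a regularized potential operator on the ball), while your use of the Costabel--McIntosh operator $\mathsf{K}$ is a nice shortcut that delivers the $L^{2}\to H^{1}$ bound in one stroke and matches the toolbox the paper uses elsewhere. The $\LI_{0}$ branch, which is the less standard half, is also sound: the vanishing of the normal trace of $\curl\Bw$ for $\Bw\in\mathbf{H}_{0}(\curl,\Omega)$ justifies the extension by zero, and the vanishing of the periods of $\Bu:=\mathsf{K}\widehat{\Phibf}$ over cycles in $D$ (you write ``periods of $\Phibf$''; you mean periods of the potential $\Bu$, as your own explanation makes clear) reduces via Stokes to circulations of $\Bw$ along curves in $\partial\Gamma$-free parts of $\partial\Omega$, which die because the tangential trace of $\Bw$ vanishes. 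Three points deserve explicit care in a written-out version: (i) the flux and period identities must be established for merely Lipschitz $\Omega$ and $\Bw\in\mathbf{H}(\curl,\Omega)$ by the usual cutoff/density argument (test the normal trace of $\curl\Bw$ against a function equal to $1$ near one boundary component and $0$ near the others), not by a literal application of Stokes; (ii) the normalization of $q$ and the Poincar\'e inequality must be imposed on each connected component of $D$ separately, since $D$ is disconnected whenever $\Omega$ has cavities; (iii) the single-valuedness of the trace of $\nabla\widetilde{q}$ on $\partial\Omega$, which you use to conclude $\Bu-\nabla\widetilde{q}\in(H^{1}_{0}(\Omega))^{3}$, rests on $\widetilde{q}\in H^{2}(\bbR^{3})$, so the $H^{2}$-regularity of $q$ (automatic from $\nabla q=\Bu\in(H^{1}(D))^{3}$) and the boundedness of the Stein extension in $H^{2}$ are genuinely needed, not decorative. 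None of these is a gap in the idea; they are the ``bookkeeping'' you yourself flag.
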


The construction is different for boundaries and screens and yields different
projection operators $\PR_{c}$ and $\PR_{o}$, respectively, fortunately sharing
the same pivotal properties. To begin with, fix $\Vu\in \VX$.
\begin{enumerate}
\item[(I)] Case of a closed surface $\Gamma=\partial\Omega$ \cite[Sect.~7]{HIP01b}:
  $\PR_{c}\Vu := {\bigl((\LI\Phibf)\times\Bn\bigr)|}_{\Gamma}$, where
  \begin{gather*}
    \Phibf := \grad w:\quad w\in H^{1}(\Omega):\quad
    \begin{array}[c]{rcll}
      -\Delta w &=& 0 & \text{in }\Omega\;,\\
      \grad w\cdot\Bn &=& \bDiv \Vu & \text{on }\Gamma\;.
    \end{array}
  \end{gather*}
  The fact that $\int\nolimits_{\Sigma}\bDiv \Vu\,\mathrm{d}S = 0$ for each connected
  component $\Sigma$ of $\Gamma$ guarantees $\Phibf\in
  \curl\mathbf{H}(\curl,\Omega)$, and Lemma~\ref{lem:L} can be applied.
\item[(II)] Case of a bounded open orientable Lipschitz surface $\Gamma$ 
  with boundary $\partial \Gamma$ and unit normal vector field $\Bn_{\Gamma}$:
  \begin{assumption}
    \label{ass:geo}
    There exist two bounded Lipschitz domains $\Omega_{1},\Omega_{2}\subset\bbR^{3}$
    satisfying
    \begin{itemize}
    \item $\overline{\Omega}_{1}\cap\overline{\Omega}_{2}=\overline{\Gamma}$,
    \item $\Omega:=\Omega_{1}\cup\Gamma\cup\Omega_{2}$ is a bounded Lipschitz domain
      with trivial topology,
    \item $\Gamma\subset\partial\Omega_{1}$ and $\Gamma\subset\partial\Omega_{2}$.
    \end{itemize}
  \end{assumption}
  In words, $\Gamma$ is the cut chopping the sphere-like $\Omega$ into two parts
  $\Omega_{1}$, $\Omega_{2}$, see Figure~\ref{fig:screen}.

  \begin{figure}[!htb]
    \centering
    \psfrag{G}{$\Gamma$}
    \psfrag{O1}{$\Omega_{1}$}
    \psfrag{O2}{$\Omega_{2}$}
    \psfrag{ng}{{$\Bn_{\Gamma}$}}
    \includegraphics[width=0.6\textwidth]{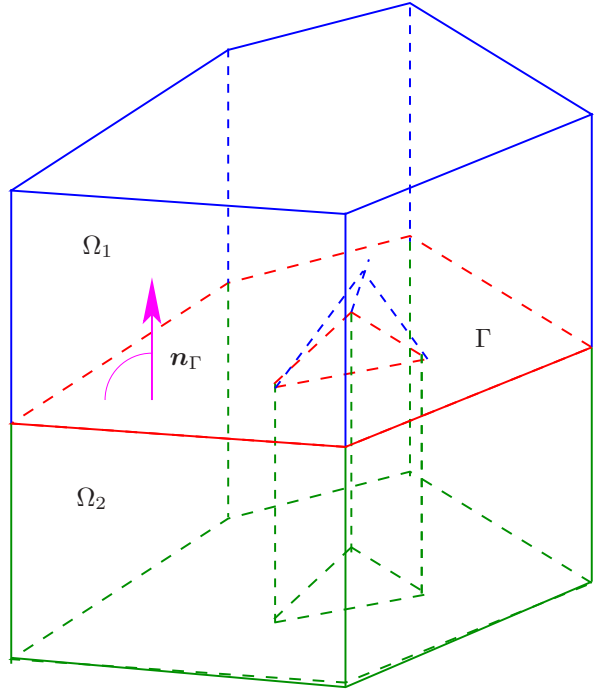}
    \caption{Screen $\Gamma$ with attached domains $\Omega_1$ and $\Omega_2$. Note
      the nontrivial topology of $\Gamma$ and how it can be dealt with in the
      construction of $\Omega$.
    }
    \label{fig:screen}
  \end{figure}

  The fact that $\int\nolimits_{\Gamma}\bDiv \Vu\,\mathrm{d}S = 0$ makes it
  possible to define for $i=1,2$
  \begin{align*}
    w_{i}\in H^{1}(\Omega_{i}):\quad 
     \begin{array}[c]{rcll}
      -\Delta w_{i} &=& 0 & \text{in }\Omega_{i}\;,\\
      \grad w_{i}\cdot\Bn & = & 0 & \text{on }\partial \Omega_{i}\setminus\Gamma\;,\\
      \grad w_{i}\cdot\Bn_{\Gamma} &=& \bDiv \Vu & \text{on }\Gamma,
    \end{array}
  \end{align*}
and then
  \[
    \Phibf := 
    \begin{cases}
      \grad w_{1} & \text{in }\Omega_{1}\\
      \grad w_{2} & \text{in }\Omega_{2}
    \end{cases}\quad  \in \mathbf{H}_{0}(\Div 0,\Omega)\;,
  \]
  because the normal component of $\Phibf$ is continuous across $\Gamma$. Hence, we
  can apply Lemma~\ref{lem:L} and set $\PR_{o}\Vu :=
  {\bigl((\LI_{0}\Phibf)\times\Bn_{\Gamma}\bigr)|}_{\Gamma}$.
\end{enumerate}

\medskip
By using elliptic lifting theorems, the continuity of $\LI$ and $\LI_{0}$,
and trace theorems we conclude, $\ast=c,o$:
\begin{gather}
  \label{eq:3}
  \exists C=C(\Gamma)>0:\quad
  \N{\PR_{\ast}\Vu}_{\mathbf{H}_{\perp}^{1/2}(\Gamma)} \leq C \N{\bDiv\Vu}_{H^{-1/2}(\Gamma)}
  \quad\forall \Vu\in \VX\;,
\end{gather}
where $\mathbf{H}_{\perp}^{1/2}(\Gamma)\subset \VX$ is the rotated tangential trace space 
\begin{itemize}
\item of $(H^{1}(\Omega))^{3}$ on $\Gamma:=\partial\Omega$ for closed surfaces
  \cite{BUC99,BCS00},
\item of $(H_{0}^{1}(\Omega))^{3}$ on the screen $\Gamma$, see \cite[Sect.~3.2]{BUC99}.
\end{itemize}
Moreover, by construction, on $\Gamma$
\begin{gather}
  \label{eq:2}
  \bDiv \PR_{\ast}\Vu = \bDiv\Vu\quad \forall \Vu\in \VX\quad
  \Rightarrow\quad \PR_{\ast}^{2} = \PR_{\ast}\;.
\end{gather}
Now we are in a position to define
\begin{gather}
  \label{eq:4}
  \VV := \PR_{\ast}(\VX) \subset \mathbf{H}_{\perp}^{1/2}(\Gamma),\quad
  \VW := (Id-\PR_{\ast})(\VX) 
  \overset{\text{by \eqref{eq:2}}}{=} \VX\cap 
  \mathbf{H}^{-1/2}(\bDiv0,\Gamma)\;.
\end{gather}
In light of \eqref{eq:3}, the continuous embedding
$\mathbf{H}_{\perp}^{1/2}(\Gamma)\hookrightarrow \mathbf{H}^{-1/2}(\bDiv,\Gamma)$ ensures
stability of the splitting.

Note that the embedding $\VV\hookrightarrow \mathbf{L}^{2}_{t}(\Gamma)$ is compact by
\eqref{eq:3} and Rellich's theorem. Thus, thanks to the
$(H^{1/2}(\Gamma))'$-coercivity (resp.,
$(\mathbf{H}_{\perp}^{1/2}(\Gamma))'$-coercivity) of the single layer boundary
integral operator $V_{k}$ (resp., $\VV_{k}$), see \cite[Prop.~2]{BCS01},
\cite[Lemma~8]{BUH03}, \cite[Lemma~7]{BUH03}, and \cite[Proof of Thm.~3.4]{BUC01}, we
infer the $\VX$-coercivity of {$a_{|V\times V}$} and $a_{|W\times W}$. Again,
appealing to the compact embedding $\VV\hookrightarrow
(\mathbf{H}_{\perp}^{1/2}(\Gamma))'$, the compactness of $a_{|V\times W}$ and
$a_{|W\times V}$ is immediate \cite[Lemma~9]{BUH03}. This yields \textbf{(A)}.

\begin{remark}
  \label{rem:goe}
  Assumption~\ref{ass:geo} is easily verified for piecewise smooth Lipschitz
  screens through extension in normal direction followed by patching holes
  by means of thick cutting surfaces in order to mend topological defects. 
  Yet, to keep the paper focused, we will not elaborate on this, but
  prefer to retain Assumption~\ref{ass:geo}.
\end{remark}

\begin{remark}
  \label{rem:traces}
  We recall from \cite{BUC99} that $\VX$ is the natural tangential trace space of
  $\Hcurl$ for a closed surface $\Gamma$, and of $\zbHcurl$ for a screen
  $\Gamma$.
\end{remark}


\newcommand{\PM}{\operatorname{\mathsf{P}}}

\section{Smoothed Poincar\'e lifting}
\label{sec:smooth-poinc-mapp}

For a domain $D\subset\bbR^{2}$ that is star-shaped with respect to $\Ba\in D$,
the Poincar\'e lifting
\begin{gather}
  \label{eq:P}
  (\PM_{\Ba}u)(\Bx) := \int\nolimits_{0}^{1}
  \tau u(\Ba+\tau(\Bx-\Ba))(\Bx-\Ba) 
  \,\mathrm{d}\tau
\end{gather}
provides a right inverse of the 2D divergence-operator $\Div \Vu := (\frac{\partial
  u_{1}}{\partial x_{1}}+\frac{\partial u_{2}}{\partial x_{2}})$ for continuous
functions: $\Div \PM_{\Ba} u = u$ for all $u\in C^{0}(\overline{D})$,
see \cite[Sect.~3]{DEB01}. In \cite{CMI08} M. Costabel and A. McIntosh demonstrated
how to mend the somewhat insufficient continuity properties of $\PM_{\Ba}$ by local
averaging:
\begin{assumption}
  \label{ass:ss}
  $D$ is star-shaped with respect to a ball $B\subset D$.
\end{assumption}

Then define the \emph{smoothed Poincar\'e lifting} \cite[Sect.~3]{CMI08} as
\begin{gather}
  \label{eq:PS}
  (\PM u)(\Bx) := \int\nolimits_{B} \psi(\Ba)
  (\PM_{\Ba} u)(\Bx)\,\mathrm{d}\Ba\;,
\end{gather}
where $\psi\in C^{\infty}(\bbR^{2})$, $\operatorname{supp}(\psi)\subset B$,
$\int_{B}\psi(\Bx)\,\mathrm{d}\Bx = 1$. We get the following powerful 
mapping properties from \cite[Cor.~3.3]{CMI08}.

\begin{theorem}
  \label{thm:PS}
  Under Assumption~\ref{ass:ss}, the smoothed Poincar\'e lifting $\PM$ according to
  \eqref{eq:PS} provides a continuous operator $\PM:H^{s}(D)\mapsto (H^{s+1}(D))^{2}$
  for any $s\in\bbR$ and satisfies $\Div \PM \varphi = \varphi$ for all $\varphi\in
  L^{2}(D)$.
\end{theorem}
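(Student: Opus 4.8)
This is \cite[Cor.~3.3]{CMI08}; I indicate the structure of the argument. The plan is to recast $\PM$ as a pseudodifferential operator of order $-1$ and then to appeal to the classical Sobolev mapping properties of such operators, the only genuine work being the transfer from $\bbR^{2}$ (or from a smooth reference domain) to the Lipschitz domain $D$ over the full real Sobolev scale. I first put $\PM$ into kernel form. Inserting \eqref{eq:P} into \eqref{eq:PS}, interchanging the $\Ba$- and $\tau$-integrations, and substituting $\By=\tau\Bx+(1-\tau)\Ba$ for $\Ba$ at each fixed $\tau\in(0,1)$ yields
\[
  (\PM u)(\Bx)=\int_{D}\mathbf{G}(\Bx,\By)\,u(\By)\,\mathrm{d}\By,\qquad
  \mathbf{G}(\Bx,\By)=(\Bx-\By)\int_{0}^{1}\psi\Bigl(\tfrac{\By-\tau\Bx}{1-\tau}\Bigr)\,\tau\,(1-\tau)^{-3}\,\mathrm{d}\tau .
\]
The substitution $\sigma=(1-\tau)^{-1}$, followed by a radial substitution along the ray $\rho\mapsto\By-\rho(\Bx-\By)/|\Bx-\By|$, collapses this, since the dimension equals $2$, to
\[
  \mathbf{G}(\Bx,\By)=\frac{\Bx-\By}{|\Bx-\By|^{2}}\,
  \Psi\!\Bigl(\By,\tfrac{\Bx-\By}{|\Bx-\By|}\Bigr),\qquad
  \Psi(\By,\Bn):=\int_{0}^{\infty}\psi(\By-\rho\Bn)\,\rho\,\mathrm{d}\rho .
\]
Since $\psi\in C^{\infty}$ with $\operatorname{supp}\psi\subset B$, the function $\Psi$ is smooth and, along with all its derivatives, bounded; hence $\mathbf{G}(\Bx,\By)=k(\Bx,\Bx-\By)$ with $k(\Bx,\Bz)$ smooth for $\Bz\neq 0$, satisfying the symbol estimates of order $-1$ uniformly in $\Bx$, and with principal part $|\Bz|^{-2}\Bz\,\Psi(\Bx,\Bz/|\Bz|)$ positively homogeneous of degree $-1=1-n$ in $\Bz$. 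Finally, star-shapedness of $D$ with respect to $B$ forces $\operatorname{supp}\mathbf{G}(\Bx,\cdot)\subset\overline{D}$ for every $\Bx\in D$, because a point $\By$ with $\mathbf{G}(\Bx,\By)\neq 0$ must lie on a segment joining $\Bx$ to a point of $B$.

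Next I read off the mapping property. After a harmless localisation, legitimate because $D$ is bounded, $\mathbf{G}$ is the Schwartz kernel of a classical pseudodifferential operator of order $-1$, so by the standard $\Psi$DO Sobolev theorem the associated operator is continuous $H^{s}\to H^{s+1}$ on $\bbR^{2}$, and equally on any smooth bounded domain $D'\supset\overline{D}$, for every $s\in\bbR$. Taking such a $D'$ convex and fixing a universal Stein extension $E\colon H^{s}(D)\to H^{s}(D')$ for the Lipschitz domain $D$, the support property above gives $(\PM'Eu)|_{D}=\PM u$, whence $\PM=R_{D}\circ\PM'\circ E$ is continuous $H^{s}(D)\to(H^{s+1}(D))^{2}$ for $s\ge 0$. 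The range $s<0$ follows by duality: the transpose of $\PM$ has a kernel of the same structure, hence is again of order $-1$, and the $s\ge 0$ result for it yields the claim for $\PM$ on the dual scale, with due care for the distinction between $H^{s}(D)$ and $\widetilde H^{s}(D)$ when $|s|>\tfrac12$.

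It remains to verify $\Div\PM\varphi=\varphi$. For $\varphi\in C^{0}(\overline{D})$ one has $\Div\PM_{\Ba}\varphi=\varphi$ on $D$ for every $\Ba\in B$, and since $\Div$ commutes with the $\Ba$-average in \eqref{eq:PS} while $\int_{B}\psi=1$, also $\Div\PM\varphi=\varphi$. The case $s=0$ of the mapping property gives $\PM\colon L^{2}(D)\to(H^{1}(D))^{2}$, and $\Div\colon(H^{1}(D))^{2}\to L^{2}(D)$ is continuous, so $\Div\PM$ is a bounded operator on $L^{2}(D)$ coinciding with the identity on the dense subspace $C^{0}(\overline{D})$; therefore $\Div\PM\varphi=\varphi$ for all $\varphi\in L^{2}(D)$.

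The computational core, namely checking that $k(\Bx,\Bz)$ satisfies the order $-1$ symbol estimates, is routine once the kernel representation is in hand. I expect the real obstacle, and the point requiring the care exercised in \cite{CMI08}, to be securing the mapping property for \emph{all} real $s$ on a merely Lipschitz domain: this is exactly where the duality argument enters and where one must track the two Sobolev scales $H^{s}(D)$ and $\widetilde H^{s}(D)$, together with the fact that bounded extension operators are at one's disposal only on half of that scale.
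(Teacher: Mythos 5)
The paper offers no proof of this theorem; it is quoted directly from \cite[Cor.~3.3]{CMI08}, and your sketch is a faithful reconstruction of the argument given there: the kernel computation exhibiting $\PM$ as a classical pseudodifferential operator of order $-1$ (your formula for $\mathbf{G}(\Bx,\By)$ and the radial substitution are correct), the support property forced by star-shapedness with respect to $B$, the extension--restriction and duality steps needed to pass to a Lipschitz domain and to all real $s$, and the density argument yielding $\Div\PM\varphi=\varphi$ on $L^{2}(D)$. Nothing essential is missing, and the points you flag as delicate --- the distinction between $H^{s}(D)$ and $\widetilde H^{s}(D)$ and the availability of extension operators across the scale --- are precisely the ones treated with care in \cite{CMI08}.
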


A crucial property of the smoothed Poincar\'e mapping is the preservation of the
local boundary element spaces: the smoothed Poincar\'e mapping $\PM$ on the
(star-shaped) reference triangle $\wh{K}$ fulfills, \textit{cf.}
\cite[Sect.~3]{HIP08t}, \cite{HIP96b}, \cite[Sect.~4.2]{CMI08},
\begin{gather}
  \label{eq:8}
  \PM(\Div\RT{p}{\wh{K}}) \subset \RT{p}{\wh{K}}\quad
  \overset{\text{by \eqref{eq:17}}}{\Rightarrow}\quad \boxed{\PM(\Div \VX_{N}(\wh{K}))
    \subset \VX_{N}(\wh{K})}\;.
\end{gather}

\section{Projection based interpolation}
\label{sec:proj-based-interp}

Following \cite{HIP01b} and \cite[Sect.~4.3.1]{BUF03} again, local projection
operators will be used to build a suitable splitting of $\VX_{N}$. However,
$p$-refinement entails a more subtle approach that resorts to so-called commuting
projection based interpolation operators, see \cite{DEB04,DEB01,DEK07},
\cite[Sect.~3.6]{HIP02}, and \cite{DEM06} for a comprehensive exposition. Commuting
projectors link different finite element spaces on $\Cm$, the spaces $S_{N}$ and
$\widetilde{\VX}_{N}$ in the current setting.  Employing the relatively simple construction of
\cite{DEB01} will be sufficient for our purposes and the following results from that
article and from \cite{DEB04,BEH09b} will be used:
\begin{enumerate}
\item There are projection operators (with domains $\Cd(\cdot)$)
  \begin{align}
    \label{eq:15}
    \Pibf_{X}& :\Cd(\Pibf_{X})\subset H^{-1/2}_{\parallel}(\bDiv,\Gamma)
    \mapsto\widetilde{\VX}_{N}\;,\\
    \label{eq:19}
    \Pi_{S}&:\Cd(\Pi_{S})\subset H^{1}(\Gamma)\mapsto S_{N}\;,\\
    \label{eq:20}
    \Pi_{Q}&:L^{2}(\Gamma)\mapsto Q_{N}\;,
  \end{align}
  satisfying the \emph{commuting diagram properties} \cite[Prop.~3]{DEB01}
  \begin{gather}
    \label{eq:cdp}
    \bcurl \circ \Pi_{S} = \Pibf_{X}\circ\bcurl \quad\text{on}\quad 
    \Cd(\Pi_{S})\;,\\
    \label{eq:cdpQ}
    \bDiv\circ\Pibf_{X} = \Pi_{Q}\circ\bDiv\quad \quad\text{on}\quad 
    \Cd(\Pibf_{X})\;.
  \end{gather}
  For an open surface $\Gamma$ the interpolation operator complies with
  boundary conditions:
  \begin{gather}
    \label{eq:23}
    \Pibf_{X}(\VX\cap\Cd(\Pibf_{X})) = \VX_{N}\;.
  \end{gather}
\item As typical for the finite element interpolation operators,
  $\Pibf_{X}$ and $\Pi_{S}$ are strictly local in the sense that both these
  projectors can be obtained by patching together purely local cell based projectors
  $\Pibf_{K,X}$ and $\Pi_{K,S}$, $K\in\Cm$. This is because for any edge $E$
  of $\Cm$ with in-plane normal $\Bn_{E}$ the traces
  ${\Pibf_{X}\Vu\cdot\Bn_{E}|}_{E}$ and ${\Pi_{S}\varphi|}_{E}$ depend only on
  ${(\Vu\cdot\Bn_{E})|}_{\overline{E}}$ and ${\varphi|}_{\overline{E}}$,
  respectively.
  Furthermore, pullback commutes with local interpolation:
  \begin{gather}
    \label{eq:24}
    \Pibf_{\widehat{K},X}\circ \Phibf_{K}^{\ast} = \Phibf_{K}^{\ast}\circ
    \Pibf_{K,X}\quad\text{on}\quad \Cd(\Pibf_{K,X})\;.
  \end{gather}
\item The projectors $\Pi_{K,S}$ enjoy the approximation
  property (this follows from \cite[Thm.~4.1]{BEH09b} with a scaling argument)
  \begin{gather}
    \label{eq:6}
    \SN{\varphi-\Pi_{K,S}\varphi}_{H^{1}(K)} \leq C
    \sqrt{\frac{h_{K}}{p_{K}+1}} \SN{\varphi}_{H^{3/2}(K)}\quad
    \forall \varphi \in H^{3/2}(K)\;.
  \end{gather}
\end{enumerate}
These facts can be used to establish a special projection error estimate
for $\Pibf_{K,X}$, \textit{cf.} \cite[Sect.~5]{HIP08t}, \cite[Lemma~4.6]{HIP02},
\cite[Sect.~4]{ABD96}.
\begin{lemma}
  \label{lem:41}
  With $C>0$ depending only on the shape-regularity of the triangle $K\in\Cm$
  there holds
  \begin{gather*}
    \N{\Vu-\Pibf_{K,X}\Vu}_{L^{2}(K)} \leq C
    \sqrt{\frac{h_{K}}{p_{K}+1}}
    \N{\Vu}_{\mathbf{H}^{1/2}(K)}\;,
  \end{gather*}
  for all {$\Vu\in \mathbf{H}^{1/2}(K)$} with {$\boxed{\bDiv\Vu\in\bDiv\VX_{N}(K)}$}.
\end{lemma}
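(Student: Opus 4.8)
The plan is to pull everything back to the reference triangle $\wh{K}$, strip off the divergence part of $\Vu$ with the smoothed Poincar\'e lifting $\PM$, and then turn the remaining $\bDiv$-free contribution into a \emph{scalar} interpolation error that is controlled by the commuting diagram \eqref{eq:cdp} and the estimate \eqref{eq:6}.

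First I would set $\wh{\Vu}:=\Phibf_K^{\ast}\Vu$. Since $\VA_K$ has both singular values comparable to $h_K$, the co-variant pullback obeys $\N{\Phibf_K^{\ast}\Vg}_{L^2(\wh{K})}\sim\N{\Vg}_{L^2(K)}$ and $\SN{\Phibf_K^{\ast}\Vg}_{H^{1/2}(\wh{K})}\sim h_K^{1/2}\SN{\Vg}_{H^{1/2}(K)}$, while \eqref{eq:24} gives $\wh{\Vu}-\Pibf_{\wh{K},X}\wh{\Vu}=\Phibf_K^{\ast}(\Vu-\Pibf_{K,X}\Vu)$. Hence it suffices to prove the reference-element \emph{seminorm} bound $\N{\wh{\Vu}-\Pibf_{\wh{K},X}\wh{\Vu}}_{L^2(\wh{K})}\le C\,(p_K+1)^{-1/2}\SN{\wh{\Vu}}_{H^{1/2}(\wh{K})}$. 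Landing on the $H^{1/2}(\wh{K})$-\emph{seminorm} rather than the full norm is precisely what manufactures the extra factor $h_K^{1/2}$ on scaling back, so this bookkeeping has to be watched; the full-norm version would only give the weaker rate $(p_K+1)^{-1/2}$ on $K$.

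For the reference-element estimate: the boxed hypothesis and the (affine, hence degree-preserving) transformation of $\bDiv$-conforming elements give $\bDiv\wh{\Vu}\in\bDiv\VX_N(\wh{K})=\Cp_{p_K}(\wh{K})$. Put $\psibf:=\PM(\bDiv\wh{\Vu})$; by Theorem~\ref{thm:PS} $\bDiv\psibf=\bDiv\wh{\Vu}$, and by \eqref{eq:8} $\psibf\in\RT{p_K}{\wh{K}}\subset\VX_N(\wh{K})$, so $\Pibf_{\wh{K},X}\psibf=\psibf$ (the smooth $\psibf$ lies in the domain, and $\Pibf_{\wh{K},X}$ reproduces $\VX_N(\wh{K})$). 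Thus $\wh{\Vw}:=\wh{\Vu}-\psibf$ is $\bDiv$-free on $\wh{K}$ and $\wh{\Vu}-\Pibf_{\wh{K},X}\wh{\Vu}=\wh{\Vw}-\Pibf_{\wh{K},X}\wh{\Vw}$. Since $\wh{K}$ is simply connected (and star-shaped), $\wh{\Vw}$ admits a stream function $\phi\in H^{3/2}(\wh{K})$, $\bcurl\phi=\wh{\Vw}$ (from the Poincar\'e-type lifting of \cite{CMI08} applied to the rotated, curl-free field), with $\SN{\phi}_{H^{3/2}(\wh{K})}=\SN{\wh{\Vw}}_{H^{1/2}(\wh{K})}$ automatically and $\phi\in C^0(\overline{\wh{K}})$, so $\phi\in\Cd(\Pi_{\wh{K},S})$. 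The local, pulled-back form of \eqref{eq:cdp} then yields $\Pibf_{\wh{K},X}\wh{\Vw}=\bcurl\Pi_{\wh{K},S}\phi$, and using $\N{\bcurl v}_{L^2(\wh{K})}=\SN{v}_{H^1(\wh{K})}$ together with \eqref{eq:6} on the shape-regular triangle $\wh{K}$,
\[
  \N{\wh{\Vw}-\Pibf_{\wh{K},X}\wh{\Vw}}_{L^2(\wh{K})}=\SN{\phi-\Pi_{\wh{K},S}\phi}_{H^1(\wh{K})}\le C\,(p_K+1)^{-1/2}\SN{\phi}_{H^{3/2}(\wh{K})}=C\,(p_K+1)^{-1/2}\SN{\wh{\Vw}}_{H^{1/2}(\wh{K})}.
\]

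It remains to absorb $\psibf$: $\SN{\wh{\Vw}}_{H^{1/2}(\wh{K})}\le\SN{\wh{\Vu}}_{H^{1/2}(\wh{K})}+\N{\psibf}_{\mathbf{H}^{1/2}(\wh{K})}$ and $\N{\psibf}_{\mathbf{H}^{1/2}(\wh{K})}=\N{\PM(\bDiv\wh{\Vu})}_{\mathbf{H}^{1/2}(\wh{K})}\le C\N{\bDiv\wh{\Vu}}_{H^{-1/2}(\wh{K})}$ by Theorem~\ref{thm:PS} with $s=-1/2$. The hard part will be $\N{\bDiv\wh{\Vu}}_{H^{-1/2}(\wh{K})}\le C\,\SN{\wh{\Vu}}_{H^{1/2}(\wh{K})}$ (with the seminorm on the right, which is essential): I would derive it from boundedness of $\bDiv:\mathbf{H}^{1/2}(\wh{K})\to H^{-1/2}(\wh{K})$ combined with the fractional Poincar\'e inequality on $\wh{K}$, which furnishes a constant vector field $\Vc$ with $\N{\wh{\Vu}-\Vc}_{L^2(\wh{K})}\le C\SN{\wh{\Vu}}_{H^{1/2}(\wh{K})}$, so that $\N{\bDiv\wh{\Vu}}_{H^{-1/2}}=\N{\bDiv(\wh{\Vu}-\Vc)}_{H^{-1/2}}\le C\N{\wh{\Vu}-\Vc}_{\mathbf{H}^{1/2}}\le C\SN{\wh{\Vu}}_{H^{1/2}}$. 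Concatenating these inequalities and scaling back as above produces $\N{\Vu-\Pibf_{K,X}\Vu}_{L^2(K)}\le C\sqrt{h_K/(p_K+1)}\,\SN{\Vu}_{H^{1/2}(K)}\le C\sqrt{h_K/(p_K+1)}\,\N{\Vu}_{\mathbf{H}^{1/2}(K)}$, in fact slightly sharper than asserted. The only secondary matters to check are the domain memberships of the projectors (guaranteed by $\mathbf{H}^{1/2}$-regularity together with $L^2$ divergence, and by smoothness of $\psibf$ and $\phi\in H^{3/2}(\wh{K})$; cf.\ \cite{DEB01,DEB04}) and the transformation invariance of $\bDiv\VX_N(\cdot)$ under $\Phibf_K^{\ast}$ used above.
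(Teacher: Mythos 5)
Your proof is correct and follows essentially the same route as the paper: pull back to $\wh{K}$, split off $\PM(\bDiv\wh{\Vu})\in\VX_N(\wh{K})$ via \eqref{eq:8}, reduce the remaining $\bDiv$-free part to a scalar interpolation error through the commuting diagram \eqref{eq:cdp} and estimate \eqref{eq:6}, then scale back. The only cosmetic difference is that you carry the $\mathbf{H}^{1/2}$-seminorm through the whole argument (via a fractional Poincar\'e step for $\N{\bDiv\wh{\Vu}}_{H^{-1/2}}$), whereas the paper works with full norms and passes to the seminorm at the end by the fractional Bramble--Hilbert lemma of \cite{DUS80}.
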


\begin{proof}
  Write $\PM$ for the smoothed Poincar\'e lifting (see
  Sect.~\ref{sec:smooth-poinc-mapp}) on $\wh{K}$. Fix $K\in\Cm$ and pick $\Vu\in
  \mathbf{H}^{1/2}(\widehat{K})$ with ${\Div\Vu\in\Div\VX_{N}(\wh{K})}$. This vector
  field is split according to
  \begin{gather}
    \label{eq:7}
    \Vu = \PM\Div\Vu + \underbrace{(\Vu- \PM\Div\Vu)}_{\hbox{$\Div$-free}}
      = \PM\Div\Vu + \curl_{2D} \varphi\;,
  \end{gather}
  where $\curl_{2D}$ denotes a rotated gradient and
  the existence of the scalar potential $\varphi\in \{\psi\in
  H^{1}(\wh{K}):\,\int_{\wh{K}}\psi\,\mathrm{d}\Vx=0\}$ is a consequence of
  $\Div(\Vu-\PM\Div\Vu)=0$, which follows from Theorem~\ref{thm:PS}.
  Theorem~\ref{thm:PS} also supplies the continuity of
  $\PM:H^{-1/2}(\wh{K})\mapsto \mathbf{H}^{1/2}(\wh{K})$, which paves the way for
  estimating
  \begin{align}
    \label{eq:11}
    \SN{\varphi}_{H^{3/2}(\wh{K})} & \leq 
    C \SN{\curl_{2D}\varphi}_{\mathbf{H}^{1/2}(\wh{K})} \leq
    C \left(\N{\Vu}_{\mathbf{H}^{1/2}(\wh{K})} + 
      \N{\PM\Div\Vu}_{\mathbf{H}^{1/2}(\wh{K})}\right)
    \nonumber
    \\[3pt]
    & \leq \N{\Vu}_{\mathbf{H}^{1/2}(\wh{K})} + 
    C \N{\Div\Vu}_{H^{-1/2}(\wh{K})} \leq 
    C \N{\Vu}_{\mathbf{H}^{1/2}(\wh{K})}\;,
  \end{align}
  where the first step is justified by interpolation between $H^{1}(\wh{K})$ and
  $H^{2}(\wh{K})$. Then, by the projector property of $\Pibf_{\wh{K},X}$,
  imbedding \eqref{eq:8}, and the discrete nature of $\Div\Vu$, there holds 
  \begin{align*}
    \Vu-\Pibf_{\wh{K},X}\Vu & = \underbrace{(Id-\Pibf_{\wh{K},X}) \PM\Div\Vu}_{=0}
    + (Id-\Pibf_{\wh{K},X})\curl_{2D} \varphi \\ 
    & \overset{\text{by \eqref{eq:cdp}}}{=} \curl_{2D}(Id-\Pi_{\wh{K},S})\varphi\;,
  \end{align*}
  where we owe the last identity to the commuting diagram property
  \eqref{eq:cdp} on $\wh{K}$. This makes it possible to apply \eqref{eq:6}
  \begin{gather*}
    \N{\Vu-\Pibf_{\wh{K},X}\Vu}_{L^{2}(\wh{K})}
    \begin{aligned}[t]
      & =  
    \SN{\varphi-\Pi_{\wh{K},S}\varphi}_{H^{1}(\wh{K})} \\ 
    & \leq  C\,(p_{K}+1)^{-1/2}\,
    \SN{\varphi}_{H^{3/2}(\wh{K})} \overset{\text{\eqref{eq:11}}}{\leq}
    C\,(p_{K}+1)^{-1/2}\,\SN{\Vu}_{\mathbf{H}^{1/2}(\wh{K})}\;.
  \end{aligned}
  \end{gather*}
  Here, switching to the semi-norm in $\mathbf{H}^{1/2}(\wh{K})$
  can be justified by a fractional Bramble-Hilbert lemma
  \cite[Prop.~6.1]{DUS80}. Eventually, \eqref{eq:24} and 
  a scaling argument take the estimate to the cell $K$.
\end{proof}

We have implicitly proved that the ($p$-dependent !) local projectors
$\Pibf_{K,X}:\{\Vu\in \mathbf{H}^{1/2}(K):\, \bDiv\Vu\in\bDiv\VX_{N}(K)\}\mapsto
\VX_{N}(K)$ are continuous with norm \emph{independent of $p$}.

\begin{remark}
  \label{rem:prj}
  In fact, the projector $\Pibf_{X}$ is closely linked to the splitting
  \eqref{eq:5}. From \cite{DEB01} and \cite[Sect.~4.]{DEM06} we extract the
  particular form
  \begin{gather*}
    \Pibf_{X} = \Pibf_{0} + \sum\limits_{E\in\Ce} \Pibf_{E}(Id-\Pibf_{0}) + 
    \sum\limits_{K} \Pibf_{K}(Id-\Pibf_{E})(Id-\Pibf_{0})\;,
  \end{gather*}
  where $\Pibf_{0}$, $\Pibf_{E}$, $\Pibf_{K}$ are suitable projection operators
  into $\RT{0}{\Cm}$, $\RT{p_{E}}{E}$, and $\RTn{p_{K}}{K}$, respectively.
\end{remark}


\section{Discrete splitting}
\label{sec:discrete-splitting}

Since $\bDiv \PR_{\ast} \VX_{N} = \bDiv\VX_{N}$, \eqref{lem:41} confirms that the
following definitions are valid for $\ast=c,o$: 
\begin{gather}
  \label{eq:9}
  \VV_{N} := \Pibf_{X}\PR_{\ast}(\VX_{N})\quad,\quad
  \VW_{N} := (Id - \Pibf_{X}\circ \PR_{\ast})\VX_{N}\;.
\end{gather}
By the commuting diagram property \eqref{eq:cdpQ} and \eqref{eq:2}, we find
\begin{gather}
  \bDiv\Pibf_{X}\PR_{\ast}\Vu_{N} = \Pi_{Q}\bDiv\PR_{\ast}\Vu_{N} =
  \Pi_{Q}\underbrace{\bDiv\Vu_{N}}_{\in Q_{N}} = \bDiv\Vu_{N}\quad
  \forall \Vu_{N}\in\VX_{N}\;.\notag\\
  \label{eq:21}
  \Rightarrow\quad \PR_{\ast}\Pibf_{X}\PR_{\ast} = \PR_{\ast}\quad\text{on
  }\VX_{N}\;.
\end{gather}
Hence, $\Pibf_{X}\PR_{\ast}:\VX_{N}\mapsto\VX_{N}$ is a projection, which confirms
that $\VX_{N} = \VV_{N} + \VW_{N}$. Stability, in the sense of
\begin{gather}
  \label{eq:hstab}
  \N{\Pibf_{X}\PR_{\ast}\Vu_{N}}_{X} \leq C \N{\Vu_{N}}_{X}\;,
\end{gather}
with $C>0$ depending on $\Gamma$ and the shape-regularity of $\Cm$ only, is another
consequence of Lemma \ref{lem:41} together with $\bDiv\Pibf_{X}\PR_{\ast}\Vu_{N} =
\bDiv\Vu_{N}$. This latter property also implies
$\VW_{N}\subset\VW=\VX\cap\mathbf{H}^{-1/2}(\bDiv0,\Gamma)$. This verifies assumption
\textbf{(B)} from Sect.~\ref{sec:abstract-theory}.  

It remains to establish \textbf{(C)}, the gap property \eqref{eq:gap}, which will be an 
immediate consequence of the following lemma.

\begin{lemma}
  \label{lem:gap}
  There is a constant $C>0$ depending only on the geometry of $\Gamma$
  and the shape-regularity of $\Cm$, such that for $\ast=c,o$
  \begin{gather*}
    \N{(Id-\Pibf_{X})\PR_{\ast}\Vu_{N}}_{X} 
    \leq C \max_{K}\sqrt{\frac{h_{K}}{p_{K}+1}}
    \N{\Vu_{N}}_{X}\quad
    \forall \Vu_{N}\in \VX_{N}\;.
  \end{gather*}
\end{lemma}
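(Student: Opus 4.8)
The engine of the proof is the observation that $\Vv_{N}:=(Id-\Pibf_{X})\PR_{\ast}\Vu_{N}$ is \emph{divergence-free}. Indeed, the computation carried out just before \eqref{eq:21}---which combines the commuting diagram property \eqref{eq:cdpQ}, the membership $\bDiv\Vu_{N}\in Q_{N}$, and \eqref{eq:2}---shows that $\bDiv\Pibf_{X}\PR_{\ast}\Vu_{N}=\bDiv\Vu_{N}=\bDiv\PR_{\ast}\Vu_{N}$, whence $\bDiv\Vv_{N}=0$ and $\Vv_{N}\in\VW=\VX\cap\mathbf{H}^{-1/2}(\bDiv0,\Gamma)$, cf.~the definition \eqref{eq:4} of $\VW$. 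For such a field the $\VX$-norm reduces to the tangential trace norm, which is in turn dominated by the $\mathbf{L}^{2}$-norm thanks to the continuous embedding of the pivot space $\mathbf{L}^{2}_{t}(\Gamma)$ into $\VH_{\parallel}^{-1/2}(\Gamma)$ (see \cite{BUC99}); thus $\N{\Vv_{N}}_{\VX}\le C\,\N{\Vv_{N}}_{\mathbf{L}^{2}_{t}(\Gamma)}$ with $C$ depending only on $\Gamma$. It therefore suffices to bound the $L^{2}$-norm of $\Vv_{N}$.

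To do so I would localize. Since $\Pibf_{X}$ is assembled from strictly local cell projectors $\Pibf_{K,X}$ (the locality property recorded in Sect.~\ref{sec:proj-based-interp}), one has $\Vv_{N}|_{K}=(Id-\Pibf_{K,X})(\PR_{\ast}\Vu_{N}|_{K})$ for every $K\in\Cm$. By \eqref{eq:3} the field $\PR_{\ast}\Vu_{N}$ belongs to $\mathbf{H}_{\perp}^{1/2}(\Gamma)$, so its restriction to the flat face $K$ lies in $\mathbf{H}^{1/2}(K)$, and by \eqref{eq:2} together with $\Vu_{N}\in\VX_{N}$ its divergence on $K$ equals $(\bDiv\Vu_{N})|_{K}\in\bDiv\VX_{N}(K)$. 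Hence Lemma~\ref{lem:41} applies cell by cell and gives $\N{\Vv_{N}}_{L^{2}(K)}\le C\,\sqrt{h_{K}/(p_{K}+1)}\;\N{\PR_{\ast}\Vu_{N}}_{\mathbf{H}^{1/2}(K)}$ for each $K\in\Cm$.

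Squaring this cellwise estimate, summing over $K\in\Cm$, and pulling the maximum out in front yields $\N{\Vv_{N}}_{\mathbf{L}^{2}_{t}(\Gamma)}^{2}\le C\,\max_{K}\frac{h_{K}}{p_{K}+1}\;\sum_{K}\N{\PR_{\ast}\Vu_{N}}_{\mathbf{H}^{1/2}(K)}^{2}$. The sum of the local fractional norms is in turn controlled by $C\,\N{\PR_{\ast}\Vu_{N}}_{\mathbf{H}_{\perp}^{1/2}(\Gamma)}^{2}$---by subadditivity of the Sobolev--Slobodeckij double integral over the non-overlapping mesh, combined with the equivalence of the face-wise $H^{1/2}$-norm with the intrinsic trace-space norm---and finally \eqref{eq:3} gives $\N{\PR_{\ast}\Vu_{N}}_{\mathbf{H}_{\perp}^{1/2}(\Gamma)}\le C\,\N{\bDiv\Vu_{N}}_{H^{-1/2}(\Gamma)}\le C\,\N{\Vu_{N}}_{\VX}$. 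Chaining the estimates proves the lemma. Assumption \textbf{(C)} then follows at once: given $\Vv_{N}\in\VV_{N}=\Pibf_{X}\PR_{\ast}(\VX_{N})$ one has $\Pibf_{X}\PR_{\ast}\Vv_{N}=\Vv_{N}$, since $\Pibf_{X}\PR_{\ast}$ is a projection onto $\VV_{N}$, so choosing $\Vv:=\PR_{\ast}\Vv_{N}\in\VV$ gives $\N{\Vv_{N}-\Vv}_{\VX}=\N{(Id-\Pibf_{X})\PR_{\ast}\Vv_{N}}_{\VX}\le C\,\max_{K}\sqrt{h_{K}/(p_{K}+1)}\;\N{\Vv_{N}}_{\VX}$, which is precisely \eqref{eq:gap}.

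The step I expect to demand the most care is the passage from the local norms $\N{\cdot}_{\mathbf{H}^{1/2}(K)}$ to the global norm $\N{\cdot}_{\mathbf{H}_{\perp}^{1/2}(\Gamma)}$. Unlike integer-order norms, fractional Sobolev norms are non-local, so one cannot simply add up $H^{1/2}$ contributions; instead one argues through the Slobodeckij double-integral representation---whose contributions over the disjoint products $K\times K$ are dominated by the full integral over $\Gamma\times\Gamma$---and then invokes the equivalence, with a constant depending only on the geometry of $\Gamma$, of the resulting piecewise-$H^{1/2}$ norm on the flat faces with the intrinsic norm of $\mathbf{H}_{\perp}^{1/2}(\Gamma)$. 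One must also check that the constants in that equivalence, in the embedding $\mathbf{L}^{2}_{t}(\Gamma)\hookrightarrow\VH_{\parallel}^{-1/2}(\Gamma)$, and in \eqref{eq:3} depend only on $\Gamma$ and the shape-regularity of $\Cm$, not on cell sizes or polynomial degrees. The divergence-free reduction and the cellwise use of Lemma~\ref{lem:41} are then routine.
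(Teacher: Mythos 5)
Your argument is correct and follows the paper's proof essentially verbatim: the divergence-freeness of $(Id-\Pibf_{X})\PR_{\ast}\Vu_{N}$ reduces the $\VX$-norm to the $L^{2}$-norm, Lemma~\ref{lem:41} is applied cell by cell (legitimate because $\bDiv\PR_{\ast}\Vu_{N}=\bDiv\Vu_{N}\in\bDiv\VX_{N}(K)$), the local contributions are summed via subadditivity of the fractional (semi-)norm over $K\times K$ blocks, and \eqref{eq:3} closes the chain; your derivation of the gap property \eqref{eq:gap} from the lemma also matches the paper. The only item the paper includes that you omit is the remark for the screen case $\ast=o$, namely that $\PR_{o}\Vu_{N}$ lies in the domain of $\Pibf_{X}$ and that $\Pibf_{X}\PR_{o}\Vu_{N}$ respects the edge condition in \eqref{eq:X2} (by locality of $\Pibf_{X}$ and a density argument, \textit{cf.}\ \eqref{eq:23}) --- a well-definedness check rather than a difference of method.
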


\begin{proof} By construction, we know that $\bDiv \PR_{\ast}\Vu_{N} = \bDiv\Vu_{N}$, which 
  permits us to apply the estimate of Lemma \ref{lem:41} to ${\PR_{\ast}\Vu_{N}}|_{K}$, $K\in\Cm$:
    \begin{gather*}
      \N{(Id-\Pibf_{K,X})\PR_{\ast}\Vu_{N}}_{L^{2}(K)}^{2} \leq C
      {\frac{h_{K}}{p_{K}+1}}
      \SN{\PR_{\ast}\Vu_{N}}_{\mathbf{H}^{1/2}(K)}^{2}\;.
    \end{gather*}%
  Patching together the local projectors and using sub-additivity of the
  $\SN{\cdot}_{\mathbf{H}_{\perp}^{1/2}}$-semi-norm, we arrive at 
  (we remind that $\PR_{\ast}\Vu_{N}\in\mathbf{H}_{\perp}^{1/2}(\Gamma)$)
   \begin{align*}
     \N{(Id-\Pibf_{X})\PR_{\ast}\Vu_{N}}_{L^{2}(\Gamma)} & \leq
     C \max_{K}\sqrt{\frac{h_{K}}{p_{K}+1}}
     \SN{\PR_{\ast}\Vu_{N}}_{\mathbf{H}_{\perp}^{1/2}(\Gamma)}  \\ & 
     \overset{\text{\eqref{eq:3}}}{\leq}
     C \max_{K}\sqrt{\frac{h_{K}}{p_{K}+1}}
     \N{\bDiv\Vu_{N}}_{H^{-1/2}(\Gamma)}\;.
   \end{align*}
  Since $\bDiv((Id-\Pibf_{X})\PR_{\ast}\Vu_{N}) = 0$, this is sufficient for the
  assertion of the lemma.

  We point out that when we deal with an open surface $\Gamma$, we recall that
  $\PR_{o}\Vu_{N}\in \VX$ is guaranteed by the construction of
  Sect.~\ref{sec:abstract-theory}. For a continuous tangential vector field $\Vu\in
  \VX$ that is smooth on the faces of $\Gamma$, the constraint in \eqref{eq:X2}
  implies vanishing in-plane normal components on $\partial \Gamma$. By locality of
  $\Pibf_{X}$, this will carry over to $\Pibf_{X}\Vu$, which means $\Pibf_{X}\Vu\in
  \VX_{N}$, \textit{cf.} \eqref{eq:23}. Further, we know from
  Sect.~\ref{sec:proj-based-interp}, that $\PR_{o}\Vu_{N}$ is in the domain of
  $\Pibf_{X}$. A simple density argument then confirms
  $\Pibf_{X}\PR_{o}\Vu_{N}\in\VX_{N}$, without adjusting the interpolation operator
  $\Pibf_{X}$, and the above proof carries over unaltered.
\end{proof}

The gap property \eqref{eq:gap} now immediately follows from the estimate
of Lemma~\ref{lem:gap}:
\begin{align*}
  \sup\limits_{\Vv_{N}\in\VV_{N}} \inf\limits_{\Vv\in\VV} 
  \frac{\N{\Vv-\Vv_{N}}_{X}}{\N{\Vv_{N}}_{X}} &
  \leq \sup\limits_{\Vv_{N}\in\VV_{N}}
  \frac{\N{\PR_{\ast}\Vv_{N}-\Vv_{N}}_{X}}{\N{\Vv_{N}}_{X}} \\ &
  \overset{\text{\eqref{eq:21}}}{=} 
  \sup\limits_{\Vv_{N}\in\VV_{N}}
  \frac{\N{\PR_{\ast}\Vv_{N}-\Pibf_{X}\PR_{\ast}
      \Vv_{N}}_{X}}{\N{\Vv_{N}}_{X}} \\ &
  \overset{\text{Lemma~\ref{lem:gap}}}{\leq}
  C \max_{K}\sqrt{\frac{h_{K}}{p_{K}+1}}.
\end{align*}


\bibliographystyle{siam}
\bibliography{references}

\end{document}